\documentclass{article}
\usepackage{amsfonts}
\usepackage{graphicx}

\newtheorem{theorem}{Theorem}

\newtheorem{corollary}[theorem]{Corollary}

\newtheorem{proposition}[theorem]{Proposition}

\newenvironment{proof}[1][Proof]{\noindent\textbf{#1.} }{\ \rule{0.5em}{0.5em}}
\begin{document}

\title{A Jacobian Group Structure on a Hyperbolic Pencil of circles and its
Applications}
\author{Faruk F. Abi-Khuzam \\
Department of Mathematics\\
American University of Beirut\\
Beirut, Lebanon\\
e-mail farukakh@aub.edu.lb}
\maketitle

\begin{abstract}
Using Jacobian Elliptic functions, we introduce a novel parametrization of a
hyperbolic pencil of coaxal circles which reveals a remarkable group
structure on the pencil. The geometric properties of the group elements lead
to a new proof of of the general Poncelet theorems, which in turn leads to a
proof of the so called closure theorem. In particular we prove: if $T$ and $%
D $ are members of the pencil, then an interscribed $n$-gon to $T$ and $D$
exists, if and only if $D$, the inside circle, is an element of order $n$ in
the group.

\textbf{Mathematics Subject Classification. }51M04, 33M04

\textbf{Keywords}. Coaxal, Hyperbolic Pencil, Jacobian Elliptic Functions,
Poncelet Theorems
\end{abstract}

\section{Introduction}

Let $T$ be one circle in a hyperbolic pencil of coaxal circles \cite{RJ} in
the plane, and let $L$ be the limit point of the pencil interior to $T.$%
There will be no loss of generality if we assume, as we do, that $T$ is the
unit circle in the complex plane $%
\mathbb{C}
.$ Fix the real number $k\in (0,1)$, and let $g:%
\mathbb{R}
\rightarrow 
\mathbb{R}
$ be the function defined by 
\[
u=g(\theta )=g(\theta ;k):=\int_{0}^{\theta }\frac{dt}{\sqrt{1-k^{2}\sin
^{2}t}},\left( -\infty <\theta <\infty \right) . 
\]

We shall use this function to introduce a novel "non-classical"
parametrization of the pencil of coaxal circles determined by the circle $T$
and an interior point $L.$This parametrization reveals a remarkable group
structure on the pencil, and makes it possible to identify each of its
oriented circles with a precise homeomorphism of $T.$ The geometric
interpretation of those homeomorphisms leads to simple proofs of the
Poncelet general theorems.

The general Theorem of Poncelet \cite{RJ} may be phrased as follows:

" \textit{Let }$D_{1},D_{2},..,D_{n}$\textit{\ be }$n$\textit{\ circles of
the Coaxal pencil interior to }$T$\textit{, and let }$P$\textit{\ be a point
on }$T.$\textit{\ A line drawn from }$P$\textit{\ tangent to }$D_{1}$\textit{%
\ meets }$T$\textit{\ again in a point }$P_{1}.$ Continuing, having the
point $P_{k}$, \textit{a line is drawn from }$P_{k}$\textit{\ tangent to }$%
D_{k+1}$ and \textit{\ meeting }$T$\textit{\ in }$P_{k+1}$\textit{.Then
after }$n$\textit{\ steps, the segment }$P_{n}P$\textit{\ will be tangent to
one and the same circle of the pencil independently of the starting point }$%
P $\textit{." }

When $D_{1},D_{2},..,D_{n}$ are the same circle $D$, it may happen that the
segment $P_{n}P$ is tangent to $D$. In this case we obtain what is \textit{%
called an interscribed polygon to }$T$\textit{\ and }$D$\textit{. i.e. a
polygon with vertices on the outer circle }$T$, and sides tangent to the
inner circle $D.$

In this article, given the two circles $T$ and $D$, we use the limit point
of the coaxal pencil determined by $T$ and $D$ to determine the constant $k$
and so the function $g$ defined above. We use $g$ to construct a group, and
then it will follow that an interscribed polygon to $T$ and $D$ exists if
and only if $D$ is an element of order $n$ in the constructed group.

\begin{figure}
\centering
\includegraphics[scale=0.2]{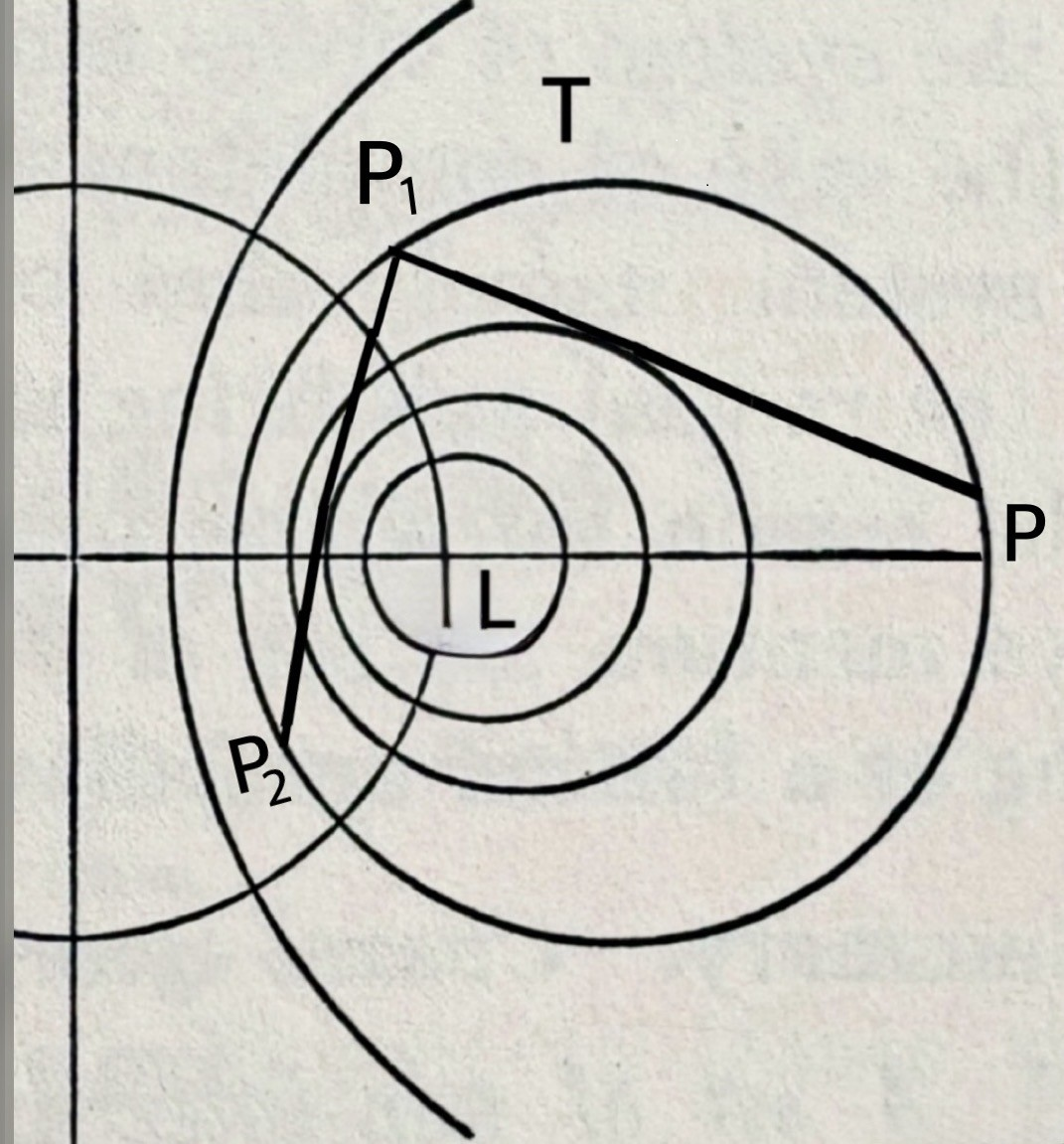}
\end{figure}


In general any two non-intersecting circles $T$ and $D$ where, say, $D$ is
inside but not concentric with $T$, determine a radical axis and a coaxal
pencil of circles symmetrically placed on both sides of the radical axis,
with their line of centers perpendicular to the axis. There will be two
limit points one of which is inside all the circles of the pencil that lie
on the same side of the radical axis. The pencil on one side is determined
by any two of its members. In particular it is determined by one circle and
the limit point inside it. We adopt this point of view here which leads to
the new parametrization alluded to above.

\section{Parametrization of a Coaxal Pencil}

Let $C_{1}$ be a circle inside, but not concentric with, the unit circle $T$%
. The two circles determine a coaxal pencil of circles $\mathcal{C}$.
Suppose, without loss of generality, that $L:=\frac{k^{\prime }-1}{k^{\prime
}+1}$, where $k^{\prime }\in (0,1),$ is the limit point of this pencil
interior to $C_{1}$. Let $k\in (0,1)$ be the index conjugate to $k^{\prime }$%
. i.e $k^{\prime 2}+k^{2}=1.$ Let $g:%
\mathbb{R}
\rightarrow 
\mathbb{R}
$ be the function defined in the introduction with respect to $k$. Then $g$
is a positive, odd, differentiable function, and $g(\theta )\rightarrow
\infty $ as $\theta \rightarrow \infty .$Put $K=g(\pi /2)$, and for $\theta
\in 
\mathbb{R}
,$ put $u=g(\theta ).$ Since $g$ is strictly increasing its inverse function
is well defined and denoted by $\textrm{am},$the "amplitude" function, so
that $\theta =\textrm{am}u$. The three Jacobian Elliptic Functions of $u$
may be defined as follows:

\[
\textrm{cn}u=\cos (\textrm{am}u)=\cos \theta ,\textrm{sn}u=\sin (\textrm{%
am}u)=\sin \theta , 
\]%
\[
\textrm{dn}u=\textrm{dn}(\textrm{am}u)=\sqrt{1-k^{2}\sin ^{2}\theta }. 
\]%
These definitions along with that of $K$ imply that $\textrm{cn}$ is an
even function with $\textrm{cn}K=0$, $\textrm{sn}$ is an odd function with 
$\textrm{sn}K=1,$ and $\textrm{dn}$ is an even function. Furthermore, $%
\textrm{cn}^{2}+\textrm{sn}^{2}=1$, and we have for each integer $n,$ 
\begin{equation}
g(\theta +n\pi )=g(\theta ,k)+2nK,
\end{equation}%
\begin{equation}
\textrm{cn}(u+2nK)=(-1)^{n}\textrm{cn}u;\textrm{sn}(u+2K)=(-1)^{n}%
\textrm{sn}u.
\end{equation}%
In the sequel we shall be using the addition laws for these functions \cite%
{WitWa}.

\begin{equation}
\textrm{cn}(u+v)=\frac{\textrm{cn}u\textrm{cn}v-\textrm{sn}u\textrm{sn}v%
\textrm{dn}u\textrm{dn}v}{1-k^{2}\textrm{sn}^{2}u\textrm{sn}^{2}v}
\end{equation}%
\begin{equation}
\textrm{sn}(u+v)=\frac{\textrm{sn}u\textrm{cn}v\textrm{dn}v+\textrm{sn}v%
\textrm{cn}u\textrm{dn}u}{1-k^{2}\textrm{sn}^{2}u\textrm{sn}^{2}v}
\end{equation}

We first show how to use these Elliptic functions to identify, by a real
number $a$, the individual oriented circles in the coaxal pencil $\mathcal{C}
$ determined by $T$ and $C_{1}$. Since a geometric circle $C$ in the pencil
may be positively or negatively oriented, the number $a$ identifying it
shall be positive if $C$ is positively oriented, and negative if $C$ is
negatively oriented. Also, no orientation shall be attached to the circle $T$%
.

\begin{theorem}
If $C\in \mathcal{C}$ is an oriented circle inside $T$, there is a unique
number $a\in (-K,K]$ such that the center of $C$ is the point $(\frac{%
\textrm{dn}a-1}{\textrm{dn}a+1},0)$ and its radius is $\frac{2\textrm{cn}a%
}{1+\textrm{dn}a}.$ This number $a$ is positive if $C$ is positively
oriented, and negative if $C$ is negatively oriented. Furthermore $C_{0}$ is 
$T$, and $C_{K}$ is the point circle $L$.
\end{theorem}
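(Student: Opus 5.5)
The plan is to characterize membership in the pencil $\mathcal{C}$ by the classical inverse-point criterion. The two limit points of a hyperbolic pencil are inverse with respect to every circle of the pencil. Since $T$ is the unit circle and $L=\frac{k'-1}{k'+1}$ is real, the second limit point is $1/L$. Hence a circle with center $(c,0)$ and radius $r$ belongs to $\mathcal{C}$ if and only if
\[
r^{2}=(c-L)(c-1/L)=1-c\left(L+\tfrac{1}{L}\right)+c^{2}, \qquad L+\tfrac{1}{L}=-\frac{2(1+k'^{2})}{k^{2}} .
\]
Since $r^{2}\geq 0$ and $1/L<L<0$, admissible centers satisfy $c\geq L$ (or $c\le 1/L$, which gives circles around the other limit point, outside $T$). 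I will then check that the circles of the pencil lying inside $T$ are exactly those with $c\in[L,0]$. At $c=0$ we get $r=1$, which is $T$ itself. For $c>0$, one checks that $c+r>1$, so the circle is not inside $T$. For $c\in[L,0]$, one has $r\le 1+c$, so $|c|+r\le 1$. Each of these reduces to a one-line comparison of $r^{2}$ with $(1\mp c)^{2}$, using $L+1/L<-2$. Thus the geometric circles of $\mathcal{C}$ inside $T$ are parametrized bijectively by $c\in[L,0]$, with $r=\sqrt{(c-L)(c-1/L)}$.

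Next I verify that the proposed formulas land on this family. Put $d=\textrm{dn}\,a$ and $c=\frac{d-1}{d+1}$. From $\textrm{cn}^{2}+\textrm{sn}^{2}=1$ and $\textrm{dn}^{2}=1-k^{2}\textrm{sn}^{2}$ we get $k^{2}\textrm{cn}^{2}a=d^{2}-k'^{2}$. Therefore the proposed radius satisfies
\[
r^{2}=\frac{4(d^{2}-k'^{2})}{k^{2}(1+d)^{2}} .
\]
Multiplying the membership identity by $k^{2}(1+d)^{2}$, it becomes
\[
k^{2}(1+d)^{2}+k^{2}(d-1)^{2}+2(d^{2}-1)(1+k'^{2})=4d^{2}-4k'^{2},
\]
which holds after substituting $k^{2}=1-k'^{2}$ and expanding. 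For $a\in[0,K]$ we have $\textrm{cn}\,a\geq 0$, so the formula gives the nonnegative root $r$.

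For existence and uniqueness, I use the monotonicity of the parametrization. On $[0,K]$, $\theta=\textrm{am}\,a$ increases from $0$ to $\pi/2$. Hence $\textrm{dn}\,a=\sqrt{1-k^{2}\sin^{2}\theta}$ decreases strictly and continuously from $1$ to $k'$. Since $d\mapsto\frac{d-1}{d+1}$ is strictly increasing, $c$ decreases strictly from $0$ to $\frac{k'-1}{k'+1}=L$. So $|a|\in[0,K]$ corresponds bijectively to $c\in[L,0]$, that is, to the geometric circles inside $T$.

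The formulas depend only on $|a|$, because $\textrm{dn}$ and $\textrm{cn}$ are even. The sign of $a$ is therefore free to encode orientation: $a>0$ for positive orientation and $a<0$ for negative. The two exceptional circles carry no orientation. At $a=0$ we get $c=0$ and $r=1$, so $C_{0}=T$. At $a=K$ we get $\textrm{cn}\,K=0$, so $r=0$ and $c=L$, and $C_{K}$ is the point circle $L$. Choosing the half-open interval $(-K,K]$ keeps exactly one representative, $K$, for the point circle. This gives the unique $a$.

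The main obstacle I expect is not the algebraic identity, which is routine, but cleanly justifying that the circles of $\mathcal{C}$ inside $T$ are exactly those with center in $[L,0]$. This requires excluding $c>0$ and the branch $c\le 1/L$, and it requires the convention that the oriented circles counted are the ones on the same side of the radical axis as $L$. I would handle it through the explicit inequalities $r\lessgtr 1\pm c$ derived from the formula for $r^{2}$.
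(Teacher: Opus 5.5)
Your proof is correct and is essentially the paper's argument. Your inverse-limit-point relation $r^{2}=(c-L)(c-1/L)$ is exactly the paper's equal-power condition at the point $x=\frac{L^{2}+1}{2L}$, where the radical axis meets the line of centers, so both proofs rest on the same algebraic identity in $\textrm{dn}\,a$ and $\textrm{cn}\,a$. You go beyond the paper in proving the existence and uniqueness of $a$: the paper only shows that each $C_a$ lies in the pencil and then asserts that the family is the whole pencil. Your characterization of the interior circles by $c\in[L,0]$, together with the strict monotonicity of $\textrm{dn}$ on $[0,K]$, actually shows that every circle of $\mathcal{C}$ inside $T$ is obtained exactly once, up to the sign convention for orientation.
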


\begin{proof}
\textit{Denote by }$C_{a}$\textit{\ the circle of center }$(\frac{\textrm{dn%
}a-1}{\textrm{dn}a+1},0)$\textit{\ and radius }$\frac{2\textrm{cn}a}{1+%
\textrm{dn}a}.$\textit{Since the limit point of the pencil }$\mathcal{C}$%
\textit{\ is }$L$\textit{, the radical axis of the system is the line }$x=%
\frac{L^{2}+1}{2L}$\textit{. In order to show that the family of circles }$%
\{C_{a}:a\in (-K,K]\}$\textit{\ is the coaxal pencil determined by }$T$%
\textit{\ and }$L$\textit{\ it suffices to prove that }%
\begin{equation}
\textrm{P}(x,L)=\textrm{P}(x,T)=\textrm{P}(x,C_{a}),
\end{equation}%
\textit{where P}$(x,C)$\textit{\ is the power of point }$x$\textit{\ with
respect to circle }$C$\textit{. i.e. If }$C$\textit{\ has center }$M$\textit{%
\ and radius }$R$\textit{\ then P}$(x,C)=\overline{xM}^{2}-R^{2}.$\textit{\
The first equality is easily verified since }P$(x,L)=\left( \frac{L^{2}-1}{2L%
}\right) ^{2}=$P$(x,T).$\textit{\ To prove the remaining equality we need to
prove that }%
\[
\left( \frac{\textrm{dn}a-1}{\textrm{dn}a+1}-\frac{L^{2}+1}{2L}\right)
^{2}-\left( \frac{2\textrm{cn}a}{\textrm{dn}a+1}\right) ^{2}=\left( \frac{%
L^{2}-1}{2L}\right) ^{2}. 
\]%
\textit{This reduces to }%
\[
(\textrm{dn}a-1)^{2}-\left( 2\textrm{cn}a\right) ^{2}-\frac{L^{2}+1}{L}(%
\textrm{dn}a-1)(\textrm{dn}a+1)+(\textrm{dn}a+1)^{2}=0, 
\]%
\textit{and a straightforward computation gives this last equality. It
follows that the family }$\{C_{a}:a\in (-K,K]\}$\textit{\ is a coaxal pencil
containing }$T$\textit{\ and }$L$\textit{. Hence it is the same coaxal
pencil determined by }$T$\textit{\ and }$C_{1}.$\textit{\ In particular }$%
C_{1}=C_{a}$\textit{\ for some }$a\in (-K,K]\mathit{\cdot }$
\end{proof}

\begin{theorem}
If $a,b\in (-K,K],$let $c\in (-K,K]$ be the unique number such that $a+b$ is
congruent to $c$ mod $2K$. Define $C_{a}\circ C_{b}=C_{c}$. Then the set $%
\{C_{a}:a\in (-K,K]\}$ is a commutative group under this operation, its
identity element is $C_{0}$, the unit circle $T$, and the inverse of $C_{a}$
is $C_{-a}.$
\end{theorem}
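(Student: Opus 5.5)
The plan is to transport the group structure of $\mathbb{R}/2K\mathbb{Z}$ to the pencil via the labelling of the preceding theorem. By that theorem, every oriented circle of the pencil inside $T$ (together with $T=C_0$ and the point circle $L=C_K$) is $C_a$ for a \emph{unique} $a\in(-K,K]$. Hence the map $\phi:(-K,K]\to\{C_a\}$, $a\mapsto C_a$, is a bijection. Since $(-K,K]$ is a complete set of representatives of $\mathbb{R}$ modulo $2K$, the operation is well defined: for given $a,b$ there is exactly one $c\in(-K,K]$ with $c\equiv a+b \pmod{2K}$, so $C_a\circ C_b=C_c$ is a single well-determined circle. The key observation is that $\circ$ is precisely the image under $\phi$ of addition in the quotient group $\mathbb{R}/2K\mathbb{Z}$, identified with $(-K,K]$.

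The group axioms then follow from those of addition modulo $2K$, checked one at a time.

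\emph{Closure} is built into the definition, since $c\in(-K,K]$.

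\emph{Associativity.} Both $(C_a\circ C_b)\circ C_d$ and $C_a\circ(C_b\circ C_d)$ equal $C_e$, where $e$ is the representative in $(-K,K]$ of $a+b+d$. This holds because congruence modulo $2K$ is compatible with addition.

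\emph{Commutativity.} This is immediate from $a+b=b+a$.

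\emph{Identity.} $C_0\circ C_a=C_a$, since $a$ is its own representative. By the previous theorem, $C_0$ is $T$. This can be confirmed directly: $\textrm{dn}\,0=1$ and $\textrm{cn}\,0=1$ give center $0$ and radius $1$.

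\emph{Inverses.} For $a\in(-K,K)$ we have $-a\in(-K,K)$ and $a+(-a)=0$, so $C_{-a}$ is the inverse of $C_a$.

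The only delicate point is the endpoint $a=K$. Here $-K\notin(-K,K]$, so $C_{-K}$ must be read as the circle labelled by the representative of $-K$, namely $K$ itself (since $-K\equiv K \pmod{2K}$). Thus $C_K=L$ is its own inverse, consistent with $K+K=2K\equiv 0$. It is also consistent geometrically: the formulas for the center and radius are even in $a$ and $\textrm{cn}\,K=0$, so $C_{-K}$ and $C_K$ describe the same point circle. I will state this convention explicitly, noting that $C_{-a}$ always means $C_{c}$ with $c\equiv -a \pmod{2K}$.

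I expect this boundary bookkeeping, together with making the dependence on the uniqueness part of the previous theorem explicit, to be the main (though mild) obstacle. Everything else is a transfer of structure along a bijection.
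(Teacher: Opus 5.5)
Your proposal is correct and follows the paper's proof. The paper simply notes that, by Theorem 1, $a\mapsto C_a$ identifies the set with $(-K,K]$, so the set inherits the group structure of addition modulo $2K$. Your axiom-by-axiom check and your explicit convention that $C_{-K}$ means $C_K$ (the self-inverse point circle $L$) only spell out details the paper leaves implicit.
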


\begin{proof}
\ \textit{By \ Theorem 1, every }$c\in (-K,K]$\textit{\ determines a unique
oriented circle of the pencil }$\mathcal{C}$\textit{. So }$\{C_{a}:a\in
(-K,K]\}$\textit{\ inherits the group structure of }$(-K,K]$\textit{\ under
addition mod }$2K.$
\end{proof}

We note that an element $C_{a}$ of order $n$ in this group exists if and
only if $na$ is congruent to $0$ modulo $2K$. If $n=2,$then $C_{K}$ is the
only element of order $2$. If $a\in (0,K)$, and $n\geq 3$, is the smallest
positive integer such that $na$ is congruent to $0$ modulo $2K$, we should
have $na=2hK$ for some positive integer $h$ such that $(h,n)=1$, and $2h<n,$%
\cite{Ja}.\ \ \ \ \ \ \ \ \ \ \ \ \ \ \ \ \ \ \ \ \ \ \ \ \ \ \ \ \ \ \ \ \
\ \ \ \ \ \ \ \ \ \ \ \ \ \ \ \ \ \ \ \ \ \ \ \ \ \ \ \ \ \ \ \ \ \ \ \ \ \
\ \ \ \ \ \ \ \ \ \ \ \ \ \ \ \ \ \ \ \ 

\section{The group $A_{k}$}

In this section we construct a group of homeomorphisms on $T$ using the
function $g$ of the introduction. We retain all the notations used in the
introduction and in section 2.

Fix $\alpha \in 
\mathbb{R}
$, and for $z\in T$, write $z=e^{2i\theta }$, for some $\theta \in 
\mathbb{R}
$. \ If $u$ and $a$ are the numbers defined by 
\begin{equation}
u=g(\theta ,k),a=g(\alpha ,k),
\end{equation}%
let $\theta ^{\prime }$ be the unique number ( in $%
\mathbb{R}
$) satisfying%
\begin{equation}
u+a=g(\theta',k),\textrm{ or }\theta'=\textrm{am}(u+a).
\end{equation}%
Define the mapping $\psi _{\alpha }:T\rightarrow T$ \ by 
\begin{equation}
\psi _{\alpha }(z)=\psi _{\alpha }(e^{2i\theta })=e^{2i\theta ^{\prime }}.
\end{equation}%
For $z\in T,$ the representation $z=e^{2i\theta }$, is not unique, so we
have to show that, $e^{2i(\theta +\pi )^{\prime }}=e^{2i\theta ^{\prime }}$,
and this follows immediately from 
\[
\psi _{\alpha }(e^{2i(\theta +n\pi )})=e^{2i\textrm{am}(g(\theta
)+2nK)}=\left( \textrm{cn}(u+a+2nK)+i\textrm{sn}(u+a+2nK)\right) ^{2} 
\]%
\begin{equation}
=\left( -\textrm{cn}(u+a)-i\textrm{sn}(u+a)\right) ^{2}=\left( -\cos
\theta ^{\prime }-i\sin \theta ^{\prime }\right) ^{2}=e^{2i\theta ^{\prime
}}\cdot
\end{equation}%
so that $\psi _{a}$ is well-defined on $T.$

\begin{theorem}
For a fixed $k\in (0,1)$, the set $A_{k}=\{\psi _{\alpha }:\alpha \in (-%
\frac{1}{2}\pi ,\frac{1}{2}\pi ]\}$ where $\psi _{\alpha }$ is defined by $%
(7)-(8)$, is a commutative group under composition. Its identity element is $%
\psi _{0}$, and the inverse of an element $\psi _{\alpha }$ is $\psi
_{-\alpha }$ . In particular, $\psi _{\pi /2}$ is its own inverse, and it is
the only non-degenerate involution of $T$ in the group $A_{k}$.
\end{theorem}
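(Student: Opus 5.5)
The plan is to transport everything to the additive group $\mathbb{R}/2K\mathbb{Z}$ through the map $\Phi:T\to\mathbb{R}/2K\mathbb{Z}$, $\Phi(e^{2i\theta})=g(\theta)\bmod 2K$. By (1), $g(\theta+n\pi)=g(\theta)+2nK$, so $\Phi$ is well defined. It is a bijection because $g$ is a strictly increasing homeomorphism of $\mathbb{R}$ that carries $[0,\pi)$ onto $[0,2K)$. Continuity of $g$ and of $\textrm{am}$ makes $\Phi$ a homeomorphism. In these coordinates the definitions (6)--(8) say exactly that
\[
\Phi(\psi_\alpha(z))=\Phi(z)+g(\alpha)\pmod{2K}.
\]
Thus each $\psi_\alpha$ is conjugate under $\Phi$ to the translation $\tau_a$ by $a=g(\alpha)$, and in particular it is a homeomorphism of $T$. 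The computation preceding the theorem has already checked that this is well defined.

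The group axioms then follow at once. For $\alpha,\beta$ with $a=g(\alpha)$ and $b=g(\beta)$ we have $\psi_\alpha\circ\psi_\beta=\Phi^{-1}\tau_{a+b}\Phi$. Let $c\in(-K,K]$ be the representative of $a+b$ mod $2K$, as in Theorem 2. Since $g$ maps $(-\pi/2,\pi/2]$ bijectively onto $(-K,K]$ (it is odd and $g(\pi/2)=K$), there is a unique $\gamma\in(-\pi/2,\pi/2]$ with $g(\gamma)=c$, and then $\psi_\alpha\circ\psi_\beta=\psi_\gamma$. This gives closure. Commutativity comes from $a+b=b+a$, and associativity is inherited from composition of maps. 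The identity is $\psi_0$ because $g(0)=0$ gives $\tau_0=\mathrm{id}$. Since $g$ is odd, $g(-\alpha)=-a$, so $\psi_{-\alpha}$ is the inverse of $\psi_\alpha$. For $\alpha=\pi/2$ the argument $-\pi/2$ falls outside the index interval, but $-K\equiv K \pmod{2K}$, so $\psi_{-\pi/2}=\psi_{\pi/2}$ and $\psi_{\pi/2}$ is its own inverse. One should also note that the parametrization is injective: if $\psi_\alpha=\psi_\beta$, then $a\equiv b \pmod{2K}$ with both in $(-K,K]$, hence $a=b$ and $\alpha=\beta$. So $A_k$ is isomorphic to $\mathbb{R}/2K\mathbb{Z}$, and equally to the group of Theorem 2.

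For the involution claim, suppose $\psi_\alpha\circ\psi_\alpha=\psi_0$. This means $2a\equiv 0 \pmod{2K}$, so $a\in\{0,K\}$ within $(-K,K]$. Here $a=0$ is the identity, which is the degenerate involution, and $a=K$ gives $\alpha=\pi/2$. I would also remark that $\psi_{\pi/2}$ has no fixed points, since a fixed point would require $K\equiv 0 \pmod{2K}$; so it is a genuine non-degenerate involution.

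The main obstacle is only bookkeeping. One must check carefully that $\Phi$ is well defined and bijective, including the half-open interval endpoints, and that the formula $\theta'=\textrm{am}(u+a)$ does not depend on the choice of $\theta$. Both points reduce to identity (1), which the paper has already used in (9).
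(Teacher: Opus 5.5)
Your proof is correct and is essentially the paper's argument. Both reduce composition to addition of the parameters $a=g(\alpha)$ modulo $2K$: the paper does this pointwise via $\textrm{cn}(u+2pK)+i\,\textrm{sn}(u+2pK)=(-1)^p(\textrm{cn}\,u+i\,\textrm{sn}\,u)$, while you package the same quasi-periodicity (1) into the conjugacy $\Phi$ to translations of $\mathbb{R}/2K\mathbb{Z}$. Your version also justifies points the paper leaves implicit, namely injectivity of $\alpha\mapsto\psi_\alpha$, the endpoint case $\psi_{-\pi/2}=\psi_{\pi/2}$, and that $\psi_{\pi/2}$ is the \emph{only} non-trivial involution ($2a\equiv 0 \pmod{2K}$ forces $a\in\{0,K\}$), which the paper asserts without proof.
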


\begin{proof}
\textit{If }$\alpha ,\beta \in (-\frac{1}{2}\pi ,\frac{1}{2}\pi ]$\textit{\
and we put }%
\[
a=g(\alpha ,k),b=g(\beta ,k), 
\]%
\textit{then, }$-K<a,b\leq K$\textit{, and there is a unique number }$c\in
(-K,K]$\textit{, congruent to }$a+b$\textit{\ modulo }$2K$\textit{. Let }$%
\gamma $\textit{\ be the unique number in }$(-\frac{1}{2}\pi ,\frac{1}{2}\pi
]$\textit{\ satisfying }$c=g(\gamma ,k)$\textit{. Then }$\psi _{\beta }\circ
\psi _{\alpha }=\psi _{\gamma }$\textit{. To see this, take }$z=e^{2i\theta
}\in T$, \textit{and let}%
\[
\psi _{\alpha }(z)=e^{2i\theta ^{\prime }},\psi _{\beta }(e^{2i\theta
^{\prime }})=e^{2i\theta ^{^{\prime \prime }}},\psi _{\gamma
}(z)=e^{2i\theta ^{\prime \prime \prime }}. 
\]%
\textit{If }$u=g(\theta ,k)$\textit{, then according to the definitions }%
\[
u+a=g(\theta ^{\prime },k),u+a+b=g(\theta ^{\prime \prime },k),u+c=g(\theta
^{\prime \prime \prime },k). 
\]%
\textit{Since }$c=a+b+2pK$\textit{\ for some integer }$p\in \{-1,0,1\}$%
\textit{, we have} 
\[
e^{i\theta ^{\prime \prime \prime }}=\textrm{cn}(u+c)+i\textrm{sn}(u+c) 
\]%
\[
=\textrm{cn}(u+a+b+2pK)+i\textrm{sn}(u+a+b+2pK) 
\]%
\[
=(-1)^{p}[\textrm{cn}(u+a+b)+i\textrm{sn}(u+a+b)=(-1)^{p}e^{i\theta
^{\prime \prime }}, 
\]%
\textit{so that }$e^{2i\theta ^{\prime \prime \prime }}=e^{2i\theta ^{\prime
\prime }}$\textit{and it follows that} $\psi _{\beta }\circ \psi _{\alpha
}=\psi _{\gamma }$.\textit{If }$\alpha =0$\textit{, then }$a=0$\textit{, and 
}$g(\theta ^{\prime },k)=u+a=u=g(\theta ,k)$\textit{, implying that }$\theta
^{\prime }=\theta $\textit{\ since }$g$\textit{\ is strictly increasing.
Hence }$\psi _{0}$\textit{\ is the identity map on }$T$\textit{. Of course
it is also the identity element in }$A_{k}$\textit{\ as can be easily
verified. If }$\beta =-\alpha $\textit{, then }$b=-a$\textit{\ and }$c=0$%
\textit{, so that }$\psi _{-\alpha }$\textit{\ is the element inverse to }$%
\psi _{\alpha }$\textit{. In particular, if }$\alpha =\beta =\pi /2$\textit{%
, then }$a=b=K$\textit{\ and }$c=0$\textit{, and this shows that }$\psi
_{\pi /2}$\textit{\ is its own inverse. The group is commutative because }$c$%
\textit{\ depends only on the sum }$a+b$\textit{.}
\end{proof}

\textit{\ \ \ \ \ \ \ \ \ \ \ \ \ \ \ \ \ \ \ \ \ \ \ \ \ \ \ \ \ \ \ \ \ \
\ \ \ \ \ \ \ \ \ \ \ \ \ \ \ \ \ \ \ \ \ \ \ \ \ \ \ \ \ \ \ \ \ \ \ \ \ \
\ \ \ \ \ \ \ \ \ \ \ \ \ \ \ \ \ \ \ \ \ \ \ \ \ \ \ \ \ \ \ }

If $k=0,g(\theta ,k)=\theta $, and the mapping $\psi _{\alpha }$ is a
rotation of $T$ \ through an angle $2\alpha $, in the counterclockwise
sense. Thus we may define $A_{0}$ as the group of rotations of $T,$ which we
identify with $(-\pi /2,\pi /2]$ with the group operation addition modulo $%
\pi $. For the limiting case $k=1$, we take $A_{1}$ to be the group defined
in section 5 below$.$

\section{\protect\bigskip Geometric representation of $A_{k}$}

The two groups introduced above are, completely determined by the index $k,$
the function $g,$ and the corresponding Elliptic functions.
Consequently,there is a clear well defined correspondence between the
elements of the two groups. Indeed to the homeomorphism $\psi _{\alpha },$
corresponds the circle $C_{a}$ where $a=g(\alpha ),$ and this correspondence
is easily shown to be a group isomorphism. This means that a circle $C_{a}$
of the coaxal pencil acts like a function defined on $T$. In this section we
present the common geometric interpretation of both $C_{a}$ and $\psi
_{\alpha .}$. We start with a distance formula.

\begin{proposition}
If $z_{1}$ and $z_{2}$ are distinct points on the unit circle $T$, and $%
z_{3} $ is a point on the real axis, then the distance from $z_{3}$ to the
line joining $z_{1}$ and $z_{2}$ is given by 
\begin{equation}
\frac{1}{2}|z_{3}(1+z_{1}z_{2})-(z_{1}+z_{2})|.
\end{equation}
\end{proposition}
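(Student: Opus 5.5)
Let $\ell$ be the line through the distinct points $z_1,z_2\in T$. The plan is to write the equation of $\ell$ in complex form, using $\bar z_j=1/z_j$, and then apply the standard distance formula. A point $z$ lies on $\ell$ if and only if $(z-z_1)/(z_2-z_1)$ is real, that is,
\[
(z-z_1)(\bar z_2-\bar z_1)=(\bar z-\bar z_1)(z_2-z_1).
\]
Substituting $\bar z_j=1/z_j$ gives $\bar z_2-\bar z_1=(z_1-z_2)/(z_1z_2)$. Dividing by $z_2-z_1\neq 0$ then reduces the equation to the familiar chord equation
\[
z+z_1z_2\bar z=z_1+z_2 .
\]

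Next, put $F(z)=z+z_1z_2\bar z-(z_1+z_2)$, so that $\ell=\{F=0\}$. A line written as $\{z+w\bar z=c\}$ with $|w|=1$ can be rewritten as $\mathrm{Re}(\bar\mu z)=\text{const}$ for a suitable unimodular $\mu$. In that form the distance from a point $z_3$ to the line equals $|z_3+w\bar z_3-c|/2$. To verify the factor $\tfrac12$, pick $\mu$ with $\mu^2=-w$. Then $\bar\mu(z+w\bar z)=\bar\mu z-\mu\bar z=2i\,\mathrm{Im}(\bar\mu z)$, so the line is $\mathrm{Im}(\bar\mu z)=\text{const}$. Its distance from $z_3$ is $|\mathrm{Im}(\bar\mu z_3)-\text{const}|=\tfrac12|F(z_3)|$. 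Here $w=z_1z_2$, which has modulus one, so the formula applies.

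Finally, since $z_3$ is real we have $\bar z_3=z_3$. Therefore $F(z_3)=z_3(1+z_1z_2)-(z_1+z_2)$, which gives the claimed formula (10). The only step needing care is the normalization constant $\tfrac12$. I would double-check it with the example $z_1=i$, $z_2=-i$, $z_3=0$, where $z_1z_2=1$: the formula gives $\tfrac12|0-0|=0$, matching the fact that the chord passes through $0$. I would also check $z_1=1$, $z_2=i$, $z_3=0$: the formula gives $\tfrac12|1+i|=\sqrt2/2$, which is the correct distance.
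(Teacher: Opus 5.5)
Your proof is correct, but it reaches the formula by a different route from the paper. The paper starts from the determinant formula for the distance from a point to a line (twice the triangle's area divided by the base). It rewrites the determinant with columns $z,\bar z,1$; the substitution $(x,y)\mapsto(z,\bar z)$ has determinant $-2i$, and that is where the $\frac12$ comes from. Its ``simple computation'' is then the expansion using $\bar z_j=1/z_j$ and $\bar z_3=z_3$, in which the determinant factors as $\frac{z_1-z_2}{z_1z_2}\,[z_3(1+z_1z_2)-(z_1+z_2)]$, so that $|z_1-z_2|$ cancels the base.

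You instead derive the chord equation $z+z_1z_2\bar z=z_1+z_2$ and put it in normal form with a unimodular $\mu$, $\mu^2=-z_1z_2$. For you the $\frac12$ comes from $\bar\mu z-\mu\bar z=2i\,\mathrm{Im}(\bar\mu z)$ together with the isometry $z\mapsto\bar\mu z$. At heart the two agree: the paper's determinant, as a function of its first row $(z,\bar z,1)$, is exactly $\frac{z_1-z_2}{z_1z_2}F(z)$.

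The paper's version is a single mechanical evaluation of a standard formula, with no auxiliary choices. Yours explains why the expression appears at all: it is the chord equation evaluated at $z_3$. It also gives a signed distance at no cost. The choice $\mu=(z_2-z_1)/|z_2-z_1|$ satisfies $\mu^2=-z_1z_2$, and then $\mathrm{Im}(\bar\mu z_3)-\mathrm{Im}(\bar\mu z_1)$ is positive exactly when $z_3$ lies to the left of the directed chord from $z_1$ to $z_2$. The unsigned formula discards this side information, and the ``right side'' claims of the paper's geometric representation theorem for $A_k$ would require it.

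One step you leave tacit: $\bar\mu(z_1+z_2)$ must be purely imaginary, so that the line really reads $\mathrm{Im}(\bar\mu z)=t$ with $t$ real. This is immediate because $z_1$ satisfies the equation. Also, your preliminary mention of $\mathrm{Re}(\bar\mu z)$ should be $\mathrm{Im}(\bar\mu z)$, as in the verification you actually carry out.
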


\begin{proof}
\textit{If }$z_{1},z_{2}$\textit{, and }$z_{3}$\textit{\ are three points in
the plane with }$z_{1}\neq z_{2}$\textit{, then the distance from }$z_{3}$%
\textit{\ to the line }$L$\textit{\ joining }$z_{1}$\textit{\ to }$z_{2}$%
\textit{\ is given by the absolute value of the ratio}%
\[
\det \left\vert 
\begin{array}{ccc}
x_{3} & y_{3} & 1 \\ 
x_{2} & y_{2} & 1 \\ 
x_{1} & y_{1} & 1%
\end{array}%
\right\vert \div \sqrt{(x_{2}-x_{1})^{2}+(y_{2}-y_{1})^{2}}, 
\]%
\[
\mathit{=}\det \left\vert 
\begin{array}{ccc}
z_{3} & \bar{z}_{3} & 1 \\ 
z_{2} & \bar{z}_{2} & 1 \\ 
z_{1} & \bar{z}_{1} & 1%
\end{array}%
\right\vert \mathit{\div 2|z}_{2}\mathit{-z}_{1}\mathit{|,} 
\]%
\textit{and a simple computation establishes the required distance formula.}
\end{proof}

We are now ready to represent geometrically the elements of $A_{k}$.

\begin{theorem}
Suppose $\alpha \in (0,\frac{1}{2}\pi )$. If $z_{1}\in T$, and $z_{2}=\psi
_{\alpha }(z_{1})$, then the directed segment $z_{1}z_{2}$ is on the right
side of, and tangent to the positively oriented circle $C_{a}.$ Conversely,
if $z_{1},z_{2}\in T$, and the directed segment $z_{1}z_{2}$ is on the right
side of, and tangent to the positively oriented $C_{a}$, then $z_{2}=\psi
_{\alpha }(z_{1})$ for $0<\alpha <\frac{1}{2}\pi $.
\end{theorem}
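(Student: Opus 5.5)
We will verify the tangency directly with the distance formula of Proposition 5, using the centre and radius of $C_a$ from Theorem 1 and the addition laws (3)--(4).

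Write $z_1=e^{2i\theta}$ with $u=g(\theta)$, and $z_2=\psi_\alpha(z_1)=e^{2i\theta'}$ with $\theta'=\textrm{am}(u+a)$. Set
\[
z_1+z_2=2\cos(\theta'-\theta)\,e^{i(\theta+\theta')},\qquad 1+z_1z_2=2\cos(\theta+\theta')\,e^{i(\theta+\theta')} .
\]
Take $z_3=m=\frac{\textrm{dn}a-1}{\textrm{dn}a+1}$, the centre of $C_a$. Proposition 5 then gives
\[
\textrm{dist}=\left|\,m\cos(\theta+\theta')-\cos(\theta'-\theta)\right| .
\]
Expanding the cosines, the target identity is
\[
\textrm{cn}u\,\textrm{cn}(u+a)\,(1-m)-\textrm{sn}u\,\textrm{sn}(u+a)\,(1+m)=-\frac{2\textrm{cn}a}{1+\textrm{dn}a}.
\]
Since $1-m=\frac{2}{1+\textrm{dn}a}$ and $1+m=\frac{2\textrm{dn}a}{1+\textrm{dn}a}$, this reduces to the classical identity
\[
\textrm{cn}u\,\textrm{cn}(u+a)+\textrm{dn}a\,\textrm{sn}u\,\textrm{sn}(u+a)=\textrm{cn}a .
\]
This is the addition formula (3) applied to $\textrm{cn}((u+a)-u)$, using that $\textrm{cn},\textrm{dn}$ are even and $\textrm{sn}$ is odd. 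Verifying it from (3)--(4) is the main computational step: multiply through by $1-k^2\textrm{sn}^2u\,\textrm{sn}^2a$ and use $\textrm{cn}^2+\textrm{sn}^2=1$ and $\textrm{dn}^2=1-k^2\textrm{sn}^2$.

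The identity shows the distance from the centre to the chord equals the radius, so the chord is tangent to $C_a$. It also shows the signed quantity $m\cos(\theta+\theta')-\cos(\theta'-\theta)$ has constant sign $-$ (recall $\textrm{cn}a>0$ for $0<a<K$). That sign records which side of the directed chord $z_1z_2$ the centre lies on. Because the sign does not depend on $u$, one test case fixes the side. Take $\theta=0$, so $z_1=1$; then $\theta'=\alpha\in(0,\pi/2)$ and the chord runs counterclockwise from $1$ to $e^{2i\alpha}$. The centre $m$ lies on the real axis inside $T$, hence on the left of the directed chord, so the chord lies on the right side of $C_a$ as oriented positively. Continuity in $\theta$ carries the sign throughout. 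This side bookkeeping, and matching it to the paper's meaning of ``right side of the positively oriented circle'', is the delicate point. I would fix the convention by this explicit case rather than by a general argument.

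For the converse, fix $z_1$. From $z_1$ there are exactly two tangent lines to $C_a$, since $C_a$ lies strictly inside $T$ when $0<a<K$. They give two chords $z_1w$, one lying on each side of the circle. By the direct part, $\psi_\alpha(z_1)$ is the endpoint of the chord with the prescribed orientation. Since that chord is unique, any $z_2$ with the stated property equals $\psi_\alpha(z_1)$. The other tangent corresponds to $\psi_{-\alpha}$, that is, to the negatively oriented $C_{-a}$. This follows by applying the same computation with $a$ replaced by $-a$, which flips the sign of the radius term.
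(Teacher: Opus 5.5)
Your tangency argument is the paper's own: you apply the distance formula of Proposition 5 to the centre of $C_a$, reduce to $\textrm{cn}u\,\textrm{cn}(u+a)+\textrm{dn}a\,\textrm{sn}u\,\textrm{sn}(u+a)=\textrm{cn}a$, and verify this from (3)--(4) via $1-k^{2}\textrm{sn}^{2}a\,\textrm{sn}^{2}u=\textrm{cn}^{2}u+\textrm{dn}^{2}a\,\textrm{sn}^{2}u$; your orientation argument (continuity plus the test case $\theta=0$) and your converse (uniqueness of the tangent from $z_1$ with the circle on a prescribed side) correctly cover parts the paper's proof does not address. Two small repairs are needed: the $(1-m)$ term in your displayed target should carry a minus sign, and the side of the directed chord on which the centre lies is determined by the sign of $\sin(\theta'-\theta)$ times your bracket, not by the bracket alone, so you should record $0<\theta'-\theta<\pi$ (because $0<a<2K$ and $\textrm{am}(u+2K)=\textrm{am}\,u+\pi$) --- this also gives $z_1\neq z_2$ for the continuity step, and it shows that replacing $a$ by $-a$ reverses the chord's direction rather than flipping the sign of the radius term.
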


\begin{proof}
\textit{It suffices to calculate the distance from the center of }$C_{a}$%
\textit{\ to the line segment }$\overline{z_{1}z_{2}}$\textit{\ by the
distance formula above: recall that, if }$z_{1}=e^{2i\theta }$\textit{, and }%
$z_{2}=\psi _{\alpha }(z_{1})=e^{2i\theta ^{\prime }}$\textit{, then }%
\[
a=g(\alpha ,k),u=g(\theta ,k),u+a=g(\theta ^{\prime },k). 
\]%
\textit{It will be convenient to put }$H=1-k^{2}\textrm{sn}^{2}a\textrm{sn}%
^{2}u$\textit{, and note that }$H=\textrm{cn}^{2}u+\textrm{sn}^{2}u%
\textrm{dn}^{2}a.$\textit{We use the addition laws to obtain }%
\[
(\textrm{dn}a-1)\cos (\theta +\theta ^{\prime })-(\textrm{dn}a+1)\cos
(\theta -\theta ^{\prime }) 
\]%
\[
=-2(\textrm{dn}a\sin \theta \sin \theta ^{\prime }+\cos \theta \cos \theta
^{\prime }) 
\]%
\[
=-2(\textrm{dn}a\textrm{sn}u\textrm{sn}(u+a)+\textrm{cn}u\textrm{cn}%
(u+a)) 
\]%
\[
=-\frac{2}{H}(\textrm{sn}^{2}u\textrm{dn}^{2}a\textrm{cn}a+\textrm{cn}%
^{2}u\textrm{cn}a)=-2\textrm{cn}a. 
\]%
\textit{If we denote the center of }$C_{a}$\textit{\ by }$z_{3}$\textit{,
then we have}%
\[
z_{3}(1+z_{1}z_{2})-(z_{1}+z_{2})=z_{3}(1+e^{2i(\theta +\theta ^{\prime
})})-(e^{2i\theta }+e^{2i\theta ^{\prime }}) 
\]%
\[
=[2z_{3}\cos (\theta +\theta ^{\prime })-2\cos (\theta -\theta ^{\prime
})]e^{i(\theta +\theta ^{\prime })} 
\]%
\[
=\frac{2}{\textrm{dn}a+1}\{(\textrm{dn}a-1)\cos (\theta +\theta ^{\prime
})-(\textrm{dn}a+1)\cos (\theta -\theta ^{\prime })\}e^{i(\theta +\theta
^{\prime })} 
\]%
\[
=-\frac{4\textrm{cn}a}{\textrm{dn}a+1}e^{i(\theta +\theta ^{\prime })}, 
\]%
\textit{and this shows that the distance from the center }$z_{3}$\textit{\
of }$C_{a}$\textit{\ to the segment }$\overline{z_{1}z_{2}}$\textit{\ is }$%
\frac{2\textrm{cn}a}{\textrm{dn}a+1}$\textit{\ which is the length of the
radius. Hence }$\overline{z_{1}z_{2}}$\textit{\ is tangent to }$C_{a}$%
\textit{\ as required.}
\end{proof}

\section{The group $A_{1}$}

In this section we introduce the group $A_{1}$ corresponding to a tangent
pencil of coaxal circles. This group, which may be of independent interest,
requires no use of Elliptic functions. Let $C_{1}$ be a circle internally
tangent to $T$ at the point $(-1,0)$. As before, we represent points $z$ on $%
T$ \ by 
\begin{equation}
z=e^{2i\theta },\left( -\frac{1}{2}\pi <\theta \leq \frac{1}{2}\pi \right) .
\end{equation}%
For a given number $\alpha \in (-\frac{1}{2}\pi ,\frac{1}{2}\pi )$, let $%
\theta ^{\prime }$ be the unique number in $(-\frac{1}{2}\pi ,\frac{1}{2}\pi
]$ defined by the equations%
\begin{equation}
\cos \theta ^{\prime }=\frac{\cos \alpha \cos \theta }{1+\sin \alpha \sin
\theta },\sin \theta ^{\prime }=\frac{\sin \alpha +\sin \theta }{1+\sin
\alpha \sin \theta }\cdot
\end{equation}%
It is immediate that $\cos \theta ^{\prime }$ and $\sin \theta ^{\prime },$
are well-defined and $\cos ^{2}\theta ^{\prime }+\sin ^{2}\theta ^{\prime
}=1 $. Also, $\cos \alpha \cos \theta \geq 0$, for $\theta \in (-\frac{1}{2}%
\pi ,\frac{1}{2}\pi ]$, so that $\cos \theta ^{\prime }$ is always
non-negative, and it follows that there exists exactly one value of $\theta
^{\prime }\in (-\frac{1}{2}\pi ,\frac{1}{2}\pi ]$ that satisfies the two
equations in $(16) $.

Corresponding to the number $\alpha $ we define a mapping $\psi _{\alpha
}:T\rightarrow T$ \ by 
\begin{equation}
\psi _{\alpha }(z)=\psi _{\alpha }(e^{2i\theta })=e^{2i\theta ^{\prime
}},\left( -\frac{1}{2}\pi <\theta \leq \frac{1}{2}\pi \right) ,
\end{equation}%
and note, in particular, that $\psi _{0}(z)=z$ for all $z\in T$.

\begin{theorem}
Let $A_{1}=\{\psi _{\alpha }:-\frac{1}{2}\pi <\alpha <\frac{1}{2}\pi \}$,
where $\psi _{\alpha }$ is defined by $(16-17)$. Then $A_{1}$ is a group
under composition. Its identity element is $\psi _{0}$, and the inverse of
an element $\psi _{\alpha }$ is $\psi _{-\alpha }$.
\end{theorem}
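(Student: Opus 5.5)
The plan is to linearize the maps $\psi_\alpha$, just as $g$ does for $A_k$. The formulas (16) are exactly the $k=1$ degeneration of the addition laws (3)--(4). When $k=1$ we have $\textrm{dn}=\textrm{cn}$, $\textrm{sn}u=\tanh u$ and $\textrm{cn}u=\textrm{sech}\,u$, and $g(\theta,1)=\int_0^\theta\sec t\,dt$ is the Gudermannian inverse. I will therefore introduce $G:(-\frac{1}{2}\pi,\frac{1}{2}\pi)\to\mathbb{R}$, $G(\theta)=\textrm{artanh}(\sin\theta)$, so that $\sin\theta=\tanh G(\theta)$ and $\cos\theta=\textrm{sech}\,G(\theta)$ (using $\cos\theta>0$ on the open interval). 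With $u=G(\theta)$ and $a=G(\alpha)$, the second equation in (16) becomes
\[
\sin\theta'=\frac{\tanh a+\tanh u}{1+\tanh a\tanh u}=\tanh(u+a),
\]
and the first becomes $\cos\theta'=\textrm{sech}\,a\,\textrm{sech}\,u/(1+\tanh a\tanh u)=\textrm{sech}(u+a)$, using $\cosh(u+a)=\cosh u\cosh a(1+\tanh u\tanh a)$. Hence $\theta'=G^{-1}(G(\theta)+G(\alpha))$ for $\theta$ in the open interval. For the endpoint $\theta=\frac{1}{2}\pi$, i.e.\ $z=-1$, formula (16) gives $\cos\theta'=0$ and $\sin\theta'=1$, so $\psi_\alpha(-1)=-1$. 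This is the point of tangency of the pencil, and it corresponds to $u=+\infty$.

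Next, I identify $A_1$ with the additive group $\mathbb{R}$. Let $\Phi:T\setminus\{-1\}\to\mathbb{R}$ be $\Phi(e^{2i\theta})=G(\theta)$ for $\theta\in(-\frac{1}{2}\pi,\frac{1}{2}\pi)$; this is a bijection. Then $\psi_\alpha$ fixes $-1$ and satisfies $\Phi\circ\psi_\alpha\circ\Phi^{-1}(u)=u+G(\alpha)$, so $\psi_\alpha$ is a translation in the coordinate $u$. For $\alpha,\beta\in(-\frac{1}{2}\pi,\frac{1}{2}\pi)$ set $\gamma=G^{-1}(G(\alpha)+G(\beta))$. Since $G$ is a bijection onto $\mathbb{R}$, $\gamma$ again lies in $(-\frac{1}{2}\pi,\frac{1}{2}\pi)$; this is the step that uses the exclusion of $\alpha=\frac{1}{2}\pi$. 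Comparing both sides on $T\setminus\{-1\}$ and at $-1$ gives $\psi_\beta\circ\psi_\alpha=\psi_\gamma$, so the set is closed under composition. Associativity is inherited from composition of maps. The identity is $\psi_0$, since $G(0)=0$, and the inverse of $\psi_\alpha$ is $\psi_{-\alpha}$, since $G$ is odd. As a by-product, the group is commutative and isomorphic to $(\mathbb{R},+)$.

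The main obstacle is checking the hyperbolic identities carefully, in particular that the sign choice $\cos\theta'\ge 0$ in (16) is consistent with $\textrm{sech}(u+a)>0$. This holds because $1+\sin\alpha\sin\theta>0$ when both angles are in the open interval. The endpoint $\theta=\frac{1}{2}\pi$ must be handled separately, as above. If one prefers to avoid $G$, the fallback is to verify closure directly: substitute (16) into itself and check that the composite has the same form with $\sin\gamma=(\sin\alpha+\sin\beta)/(1+\sin\alpha\sin\beta)$. That identity is the $\tanh$ addition law written out explicitly.
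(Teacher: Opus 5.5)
Your proof is correct, but it takes a different route from the paper's. The paper's proof is a bare closure claim. It defines $\gamma$ by $\cos\gamma=\cos\alpha\cos\beta/(1+\sin\alpha\sin\beta)$ and $\sin\gamma=(\sin\alpha+\sin\beta)/(1+\sin\alpha\sin\beta)$, and asserts $\psi_\alpha\circ\psi_\beta=\psi_\gamma$. The verification is left to the reader; it is essentially your fallback, since $\sin\theta$ is moved by $s\mapsto(s+\sin\alpha)/(1+s\sin\alpha)$ and two such maps compose to one of the same form. The paper says nothing explicit about the identity, the inverses, or the point $z=-1$.

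You instead conjugate by $\Phi$, built from $G(\theta)=\textrm{artanh}(\sin\theta)=g(\theta,1)$. Each $\psi_\alpha$ becomes the translation $u\mapsto u+G(\alpha)$ of $\mathbb{R}$, with $-1$ as a common fixed point. Your $\gamma=G^{-1}(G(\alpha)+G(\beta))$ satisfies exactly the paper's formulas for $\gamma$. What this buys you:
\begin{itemize}
\item an actual proof of the composition law, which reduces to the $\tanh$ and $\textrm{sech}$ addition formulas;
\item the identity and inverses immediately, from $G(0)=0$ and the oddness of $G$;
\item automatic confirmation that $\gamma$ lies in the open interval;
\item commutativity and $A_1\cong(\mathbb{R},+)$ as by-products.
\end{itemize}

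It also makes precise the paper's informal remark that $A_1$ is the limiting case $k=1$ of $A_k$. Since $g(\cdot,1)=G$ maps $(-\frac{1}{2}\pi,\frac{1}{2}\pi)$ onto all of $\mathbb{R}$, effectively $K=\infty$. Addition mod $2K$ then becomes ordinary addition, which explains why $\alpha=\frac{1}{2}\pi$ must be excluded. The paper's route is shorter and stays within trigonometry, but as written it is an assertion rather than a verification.
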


\begin{proof}
\textit{It suffices to show that }$A_{1}$\textit{\ is closed under
composition. If }$\alpha ,\beta \in (-\frac{1}{2}\pi ,\frac{1}{2}\pi )$%
\textit{, and }$\gamma $\textit{\ is the unique number in the same interval
defined by }%
\[
\cos \gamma =\frac{\cos \alpha \cos \beta }{1+\sin \alpha \sin \beta },\sin
\gamma =\frac{\sin \alpha +\sin \beta }{1+\sin \alpha \sin \beta }, 
\]%
\textit{then }$\psi _{\alpha }\circ \psi _{\beta }=\psi _{\gamma }$\textit{,
and composition on }$A_{1}$\textit{\ is a closed operation.}
\end{proof}

The Geometric interpretation of $A_{1}$ is exactly the same as that of $%
A_{k}:$ given a number $\alpha \in (0,\frac{1}{2}\pi )$, we associate to the
mapping $\psi _{\alpha }$ , defined in $(17)$, the positively oriented
circle $C_{1,\alpha }$ of center $(\frac{\cos \alpha -1}{\cos \alpha +1},0)$
and radius $\frac{2\cos \alpha }{\cos \alpha +1}\cdot $ With the same $%
\alpha $, we associate to the mapping $\psi _{-\alpha }$ the negatively
oriented circle $C_{1,-\alpha }$.

Suppose $\alpha \in \lbrack 0,\frac{1}{2}\pi )$. For $0<\alpha <\frac{1}{2}%
\pi $, if $z\in T$, and $z^{\prime }=\psi _{\alpha }(z)$, then the directed
segment $\overrightarrow{zz^{\prime }}$ is on the right side of, and tangent
to the positively oriented circle $C_{1,\alpha }.$ Conversely, if $%
z,z^{\prime }\in T$, and the directed segment $\overrightarrow{zz^{\prime }}$
is on the right side of, and tangent to the positively oriented $C_{1,\alpha
}$, the\textit{n }$\psi _{\alpha }\circ \psi _{\alpha }=\psi _{\gamma }$%
\textit{.}

\section{Applications}

$(a)$ \ \textbf{The Poncelet general theorem}.

Consider a circle $C_{1}$ lying entirely inside the unit circle $T$, with
its center on the radius between $(-1,0)$ and $(0,0)$. Then the two circles
determine a pencil of coaxal circles, with limit point , say $L=\frac{%
k^{\prime }-1}{k^{\prime }+1}$, $0<k^{\prime }<1$. Taking $k=\sqrt{%
1-k^{\prime 2}}$, let $A_{k}$ be the group associated to this pencil. Then
each oriented circle in the system is identified with a mapping $\psi
_{\alpha },\alpha \in (-\frac{1}{2}\pi ,\frac{1}{2}\pi )$.

Starting from a point $P\in T$, draw , in succession a tangent to a first
circle $C_{a_{1}}$of the pencil meeting $T$ in $P_{1}$, and from $P_{1}$
draw a tangent to a second circle $C_{a_{2}}$of the pencil, and so forth;
and let $\alpha _{1},\alpha _{2},...$ be the corresponding numbers defined
by $a_{j}=g(\alpha _{j})$. Put 
\[
P_{1}=\psi _{\alpha _{1}}(P),\cdot \cdot \cdot ,P_{j}=\psi _{\alpha
_{j}}(P_{j-1}),j=1,...,n. 
\]

\begin{theorem}
If $a_{j}\in (-\frac{1}{2}\pi ,\frac{1}{2}\pi )$, where $j=1,2,...,n$, then
there exists a unique $\gamma \in (-\frac{1}{2}\pi ,\frac{1}{2}\pi )$, such
that%
\begin{equation}
\psi _{\alpha _{n}}\circ \cdot \cdot \cdot \circ \psi _{\alpha _{2}}\circ
\psi _{\alpha _{1}}=\psi _{\gamma }.
\end{equation}%
In particular, if $P\in T$, and $P_{j}=\psi _{\alpha _{j}}(P_{j-1}),$ then
the segment $PP_{n}$ remains tangent to the circle $C_{a}$, where $%
a=g(\gamma )$ as $P$ moves on the circle $T.$
\end{theorem}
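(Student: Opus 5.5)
The plan is to reduce everything to the group law established in Theorem 3 and the geometric description of Theorem 5, and then use induction on $n$. (The hypothesis should read $\alpha_j\in(-\frac12\pi,\frac12\pi)$, with $a_j=g(\alpha_j)$; I will read it that way.)

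First, the existence of $\gamma$ in (18). By Theorem 3, $A_k$ is closed under composition, and for $\alpha,\beta$ we have $\psi_\beta\circ\psi_\alpha=\psi_\gamma$, where $g(\gamma)$ is the representative in $(-K,K]$ of $g(\alpha)+g(\beta)$ mod $2K$. Induction on $n$ then gives
\[
\psi_{\alpha_n}\circ\cdots\circ\psi_{\alpha_1}=\psi_\gamma ,\qquad g(\gamma)=c ,
\]
where $c\in(-K,K]$ is the representative of $a_1+\cdots+a_n$ modulo $2K$. The base case $n=1$ is trivial. The inductive step applies Theorem 3 to $\psi_{\alpha_n}$ and the composite of the first $n-1$ maps.

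For uniqueness, I will show that $\alpha\mapsto\psi_\alpha$ is injective on $(-\frac12\pi,\frac12\pi]$. Evaluate at $z=1$, so $\theta=0$ and $u=0$. Then $\psi_\alpha(1)=e^{2i\alpha}$, since $\theta'=\textrm{am}(a)=\alpha$. The map $\alpha\mapsto e^{2i\alpha}$ is injective on a half-open interval of length $\pi$, so the maps $\psi_\alpha$ are distinct. The boundary case $c=K$, i.e. $\gamma=\pi/2$, must be allowed; the statement's open interval should strictly be $(-\frac12\pi,\frac12\pi]$, and I will note this.

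Second, the tangency claim. By construction $P_n=\psi_\gamma(P)$ for every $P\in T$. Theorem 5 (with $0<\gamma<\frac12\pi$) says that the directed segment from $z_1$ to $\psi_\gamma(z_1)$ is tangent to $C_{g(\gamma)}$. Applying it with $z_1=P$ shows that $PP_n$ is tangent to $C_a$, with $a=g(\gamma)$, for every $P$. Since $\gamma$ does not depend on $P$, the circle is the same as $P$ moves on $T$.

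Three cases of $\gamma$ need separate treatment.
\begin{itemize}
\item For $\gamma<0$, use $\psi_\gamma=\psi_{-\gamma}^{-1}$. Then $P=\psi_{-\gamma}(P_n)$, so the segment $P_nP$ is tangent to $C_{-a}$, which is the same geometric circle as $C_a$. Alternatively, the distance computation in the proof of Theorem 5 applies verbatim with $\textrm{cn}\,a$ replaced by $|\textrm{cn}\,a|$, because cn and dn are even.
\item For $\gamma=0$, $\psi_0$ is the identity, so $P_n=P$ and the polygon closes. This matches $C_0=T$, and the ``segment'' degenerates.
\item For $\gamma=\pi/2$, $C_K$ is the point circle $L$. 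The same distance formula gives distance $0$, so every chord $PP_n$ passes through $L$.
\end{itemize}

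The main obstacle is making Theorem 5 apply in these edge cases and with negative orientation. Its proof only computes that the distance from the center to the chord equals $2\,\textrm{cn}\,a/(1+\textrm{dn}\,a)$. That identity holds for all real $a$ with $|\textrm{cn}\,a|$ in place of $\textrm{cn}\,a$, so I would simply cite that computation for general $a\in(-K,K]$ rather than its stated range.
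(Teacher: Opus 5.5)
Your proposal is correct and follows the paper's argument: closure of $A_k$ (Theorem 3) gives $P_n=\psi_\gamma(P)$ with $\gamma$ independent of $P$, and the tangency computation of Theorem 5 then shows that $PP_n$ touches $C_{g(\gamma)}$. Your additions fill in details the paper leaves implicit: uniqueness of $\gamma$ via $\psi_\alpha(1)=e^{2i\alpha}$, the observation that $\gamma=\frac12\pi$ can occur (so the interval should be $(-\frac12\pi,\frac12\pi]$), and the separate treatment of $\gamma<0$, $\gamma=0$ and $\gamma=\frac12\pi$.
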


\begin{proof}
\textit{The closure property of the group }$A_{k}$\textit{\ implies that }$%
P_{n}=\psi _{\gamma }(P)$\textit{\ for some element }$\psi _{\gamma }\in
A_{k}$\textit{. The element }$\psi _{\gamma }$\textit{\ depends on the }$%
\{\alpha _{1},\cdot \cdot \cdot ,\alpha _{n}\}$\textit{, but not on }$P$%
\textit{.Thus there is a circle in the same pencil, namely }$C_{a},$\textit{%
\ such that }$PP_{n}$\textit{\ remains tangent to that circle as }$P$\textit{%
\ moves along }$T$\textit{.}

\begin{corollary}
An interscribed $n$-gon to $C_{1}$ and $C_{2}$ exists, if and only if $C_{1}$
is an element of order $n$ in the group $A_{k}.$Furthermore, if such a
polygon exists then each of its diagonals is tangent to a circle of the
pencil.$.$
\end{corollary}
\end{proof}

As mentioned earlier an interscribed polygon of $n$-sides exists if and only
if $C_{1}$ is an element of order $n$ in $A_{k}$. Since $g(\pi )=2K,$ we
regain\textit{\ the Jacobi condition \cite{Ja} }%
\[
\int_{0}^{\alpha }\frac{dt}{\sqrt{1-k^{2}\sin ^{2}t}}=\frac{h}{n}%
\int_{0}^{\pi }\frac{dt}{\sqrt{1-k^{2}\sin ^{2}t}} 
\]%
\textit{where }$h$\textit{\ is a positive integer and }$(h,n)=1$\textit{.}

\section{Extention of $A_{k}$}

The group $A_{k}$ introduced in the previous sections, is a subgroup of a
larger group that we shall denote by $B_{k}$. We set $\overline{A_{k}}=\{%
\overline{\psi _{\alpha }}:\psi _{\alpha }\in A_{k}\}$, and define 
\[
B_{k}=A_{k}\cup \overline{A_{k}}. 
\]

\begin{theorem}
$B_{k}$ is a group under composition, and $A_{k}$ is a normal subgroup of
it. Every element in $\overline{A_{k}}$ is of order $2,$ and $B_{k}/A_{k}$
is isomorphic to $%
\mathbb{Z}
_{2}$.
\end{theorem}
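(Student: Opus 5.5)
We first pin down what $\overline{\psi_\alpha}$ means: the map $z\mapsto\overline{\psi_\alpha(z)}$, i.e.\ $\psi_\alpha$ followed by complex conjugation $\sigma(z)=\bar z$ (reflection in the real axis, which is the line of centers of the pencil). So $\overline{A_k}=\{\sigma\circ\psi_\alpha\}$. In the parameter $\theta$, $\sigma(e^{2i\theta})=e^{-2i\theta}$, and since $g$ is odd, $u=g(\theta)$ goes to $-u$. Thus, in the coordinate $u$ (defined modulo $2K$, by (1)--(2) and the well-definedness computation (9)), $\psi_\alpha$ acts as the translation $u\mapsto u+a$ and $\sigma$ as the reflection $u\mapsto -u$. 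The plan is to reduce everything to this dictionary: $A_k$ becomes the group of translations of $\mathbb{R}/2K\mathbb{Z}$, and $B_k$ the group of all translations and reflections, the infinite dihedral-type group $O(2)$ restricted to this circle.

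The key step is the conjugation identity
\[
\sigma\circ\psi_\alpha\circ\sigma=\psi_{-\alpha}.
\]
To check it, take $z=e^{2i\theta}$ with $u=g(\theta)$. Then $\sigma(z)=e^{-2i\theta}$ has parameter $-u$, and $\psi_\alpha$ sends it to $e^{2i\,\mathrm{am}(-u+a)}$. Since $\mathrm{am}$ is odd (as the inverse of an odd function), applying $\sigma$ gives $e^{2i\,\mathrm{am}(u-a)}=\psi_{-\alpha}(z)$. Equivalently, in terms of the elliptic functions, cn is even and sn is odd. I would state this carefully, because well-definedness modulo $\pi$ in $\theta$ has already been handled in (9).

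From this identity, closure follows by cases:
\begin{itemize}
\item $\psi_\beta\psi_\alpha\in A_k$ by Theorem 3.
\item $(\sigma\psi_\beta)\psi_\alpha=\sigma\psi_\gamma$, again by Theorem 3.
\item $\psi_\beta(\sigma\psi_\alpha)=\sigma(\sigma\psi_\beta\sigma)\psi_\alpha=\sigma\psi_{-\beta}\psi_\alpha\in\overline{A_k}$.
\item $(\sigma\psi_\beta)(\sigma\psi_\alpha)=\psi_{-\beta}\psi_\alpha\in A_k$.
\end{itemize}
The identity $\psi_0$ lies in $A_k$. For inverses, $(\sigma\psi_\alpha)^2=\sigma\psi_\alpha\sigma\psi_\alpha=\psi_{-\alpha}\psi_\alpha=\psi_0$, so each element of $\overline{A_k}$ is its own inverse. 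It is not the identity, because it reverses orientation on $T$ while $\psi_0$ does not, so it has order exactly 2. Associativity is inherited from composition of maps.

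For normality and the quotient, we check that $A_k\cap\overline{A_k}=\emptyset$. In $u$ coordinates, translations have no fixed points unless they are the identity, while reflections fix $u=a/2$; alternatively, elements of $A_k$ preserve orientation and elements of $\overline{A_k}$ reverse it. Hence $A_k$ has index 2 in $B_k$, so it is normal. More concretely, the map $B_k\to\mathbb{Z}_2$ sending $A_k$ to $0$ and $\overline{A_k}$ to $1$ is a homomorphism by the four-case closure computation, and its kernel is $A_k$, which gives $B_k/A_k\cong\mathbb{Z}_2$.

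The main obstacle I expect is interpretive rather than computational. One must fix the meaning of $\overline{\psi_\alpha}$ and confirm that it is well defined and consistent with the paper's parametrization $z=e^{2i\theta}$. The argument above takes $\overline{\psi_\alpha}$ to be $\psi_\alpha$ followed by conjugation. If instead it means $\sigma\circ\psi_\alpha\circ\sigma$, that map equals $\psi_{-\alpha}\in A_k$, and the statement would fail, so the post-composed reading is the intended one. The only genuinely needed analytic fact is the oddness of $g$ and $\mathrm{am}$, which the paper has already stated.
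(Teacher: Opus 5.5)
Your proof is correct and follows the paper's route: the paper's key identity $\psi_{-\alpha}(\bar z)=\overline{\psi_\alpha(z)}$ is exactly your $\sigma\circ\psi_\alpha\circ\sigma=\psi_{-\alpha}$, and the paper derives closure, $(\bar\psi_\alpha)^2=\psi_0$, and normality from it in the same way, with normality checked directly as $\bar\psi_\beta\circ\psi_\alpha\circ\bar\psi_\beta=\psi_{-\alpha}$ rather than through your index-$2$ argument. You also prove the identity from the oddness of $g$ and $\mathrm{am}$, where the paper only appeals to the symmetry of the pencil, and your orientation argument for $A_k\cap\overline{A_k}=\emptyset$ supplies what the paper leaves implicit for ``order exactly $2$'' and $B_k/A_k\cong\mathbb{Z}_2$ (in your alternative fixed-point remark, $\sigma\circ\psi_\alpha$ fixes $u=-a/2$, not $a/2$, but the orientation argument already suffices).
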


\begin{proof}
\textit{Since the circles }$C_{k,\alpha }$\textit{\ in }$A_{k}$\textit{\ all
have their centers on the negative real radius of }$T$\textit{, it is easy
to see that}%
\[
\psi _{-\alpha }(\bar{z})=\overline{\psi _{\alpha }(z)}:=\bar{\psi}_{\alpha
}(z)\textrm{,} 
\]%
\textit{and so every element }$\bar{\psi}_{\alpha }$\textit{\ is of order }$%
2 $\textit{. Also }%
\[
\psi _{\alpha }\circ \bar{\psi}_{\beta }(z)=\psi _{\alpha }(\overline{\psi
_{\beta }(z)})=\overline{\psi _{-\alpha }(\psi _{\beta }(z))}=\overline{%
(\psi _{-\alpha }\circ \psi _{\beta })}(z) 
\]%
\textit{and }$\bar{\psi}_{\alpha }\circ \bar{\psi}_{\beta }=\psi _{-\alpha
}\circ \psi _{\beta },$\textit{\ so that }$B_{k}$\textit{\ is closed under
composition. Finally,}%
\[
(\bar{\psi}_{\beta })^{-1}\circ \psi _{\alpha }\circ \bar{\psi}_{\beta }=%
\bar{\psi}_{\beta }\circ \psi _{\alpha }\circ \bar{\psi}_{\beta }=\psi
_{-\alpha } 
\]%
\textit{and }$A_{k}$\textit{\ is a normal subgroup of }$B_{k}$\textit{.}
\end{proof}

We end this section by giving a geometric interpretation of the elements of $%
\overline{A_{k}}$.

\begin{theorem}
Let $\bar{\psi}_{\alpha }\in \overline{A_{k}},\alpha \neq 0$, and $%
a=g(\alpha ,k)$. If $z_{1}\in T$, and $z_{2}=\bar{\psi}_{\alpha }(z_{1})$,
then the line $z_{1}z_{2}$ (extended), remains tangent to the circle of
center $\ z_{3}=\frac{\textrm{dn}a+1}{\textrm{dn}a-1},$ and radius $\frac{2%
\textrm{cn}a}{1-\textrm{dn}a}$.
\end{theorem}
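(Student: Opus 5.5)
The plan is to repeat the computation in the proof of Theorem 5, using the distance formula of Proposition 4 with $z_{3}=\frac{\textrm{dn}a+1}{\textrm{dn}a-1}$, which is real. Write $z_{1}=e^{2i\theta }$ and set $u=g(\theta ,k)$, $a=g(\alpha ,k)$ and $\theta ^{\prime }=\textrm{am}(u+a)$, so that $\psi _{\alpha }(z_{1})=e^{2i\theta ^{\prime }}$. Since $\bar{\psi}_{\alpha }(z)=\overline{\psi _{\alpha }(z)}$, we get $z_{2}=e^{-2i\theta ^{\prime }}$. If $z_{1}=z_{2}$ we read the line $z_{1}z_{2}$ as the tangent to $T$ at that point, and handle it by continuity. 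Otherwise Proposition 4 applies directly.

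The key step is to factor the expression inside the modulus:
\[
1+z_{1}z_{2}=2\cos (\theta -\theta ^{\prime })e^{i(\theta -\theta ^{\prime })},\qquad z_{1}+z_{2}=2\cos (\theta +\theta ^{\prime })e^{i(\theta -\theta ^{\prime })}.
\]
Hence the distance from $z_{3}$ to the line equals $|z_{3}\cos (\theta -\theta ^{\prime })-\cos (\theta +\theta ^{\prime })|$. The roles of $\theta +\theta ^{\prime }$ and $\theta -\theta ^{\prime }$ are exactly swapped compared with Theorem 5, which is where conjugation enters. Multiplying by $\textrm{dn}a-1$ gives
\[
(\textrm{dn}a+1)\cos (\theta -\theta ^{\prime })-(\textrm{dn}a-1)\cos (\theta +\theta ^{\prime })=2(\cos \theta \cos \theta ^{\prime }+\textrm{dn}a\sin \theta \sin \theta ^{\prime }).
\]
This is $2(\textrm{cn}u\,\textrm{cn}(u+a)+\textrm{dn}a\,\textrm{sn}u\,\textrm{sn}(u+a))$. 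By the addition-law computation already carried out in the proof of Theorem 5 (using $H=\textrm{cn}^{2}u+\textrm{sn}^{2}u\,\textrm{dn}^{2}a$), it equals $2\textrm{cn}a$, independently of $u$. So the distance is $\frac{2|\textrm{cn}a|}{|\textrm{dn}a-1|}$.

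It remains to settle signs and degenerate cases. For $\alpha \in (-\frac{1}{2}\pi ,\frac{1}{2}\pi ]$, $\alpha \neq 0$, we have $a\in (-K,K]$, $a\neq 0$. Then $\textrm{sn}a\neq 0$, so $\textrm{dn}a<1$ because $k>0$, and $\textrm{cn}a\geq 0$. The distance is therefore exactly $\frac{2\textrm{cn}a}{1-\textrm{dn}a}$, the stated radius, and it does not depend on $z_{1}$; this proves tangency. Since $\textrm{cn}$ and $\textrm{dn}$ are even, $\bar{\psi}_{\alpha }$ and $\bar{\psi}_{-\alpha }$ give the same circle, which is consistent. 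When $\alpha =\pi /2$ the radius is $0$ and $z_{3}=\frac{k^{\prime }+1}{k^{\prime }-1}=1/L$, the second limit point, so every line $z_{1}z_{2}$ passes through it.

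The main obstacle is confirming that the trigonometric identity reused from Theorem 5 really has the same form here. The signs of the $\textrm{dn}a\pm 1$ coefficients are interchanged, and I would double-check that the combination still collapses to $\cos \theta \cos \theta ^{\prime }+\textrm{dn}a\sin \theta \sin \theta ^{\prime }$, not its counterpart with a minus sign. A secondary check is that $z_{3}$ lies outside $T$, which it does since $|z_{3}|>1$. This explains the word "extended": the tangency point generally lies on the extension of the chord $z_{1}z_{2}$.
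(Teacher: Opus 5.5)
Your proposal is correct and follows essentially the same route as the paper. Both apply the distance formula of Proposition 4 with $z_{2}=e^{-2i\theta^{\prime}}$, factor out $e^{i(\theta-\theta^{\prime})}$, and reduce to the Theorem 5 identity $\textrm{cn}u\,\textrm{cn}(u+a)+\textrm{dn}a\,\textrm{sn}u\,\textrm{sn}(u+a)=\textrm{cn}a$. Your explicit expansion confirms that the combination collapses with the plus sign, so the obstacle you flagged is resolved. Your handling of the signs ($\textrm{dn}a<1$, $\textrm{cn}a\geq 0$) and of the case $z_{1}=z_{2}$ fills in details the paper leaves implicit.
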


\begin{proof}
\textit{If }$z_{1}=e^{2i\theta }$\textit{, then }$z_{2}=e^{-2i\theta
^{\prime }}$\textit{, where }%
\[
u=g(\theta ,k),a=g(\alpha ,k),u+a=g(\theta ^{\prime },k)\textrm{.} 
\]%
\textit{Then the distance between the center }$z_{3}$\textit{\ and the line }%
$z_{1}z_{2}$\textit{\ is equal to }$\frac{1}{2}$\textit{\ the modulus of} 
\[
z_{3}(1+z_{1}z_{2})-(z_{1}+z_{2}) 
\]%
\[
=\frac{2e^{i(\theta -\theta ^{\prime })}}{\textrm{dn}a-1}\{(\textrm{dn}%
a+1)\cos (\theta -\theta ^{\prime })-(\textrm{dn}a-1)\cos (\theta +\theta
^{\prime })\} 
\]%
\[
=\frac{4\textrm{cn}a}{\textrm{dn}a-1}e^{i(\theta -\theta ^{\prime })}. 
\]
\end{proof}

The circle in this last theorem belongs to the part of the pencil determined
by it and $T$, but lying to the left of the radical axis of the system. Thus
we see that the algebraic structure has now been extended to all circles of
the system, except for those circles in the system containing $T$ but lying
on the same side of the radical axis as $T$.

\textbf{Historical Remark}:The problem of\textit{\ interscribed polygons} ,
and less so of the general problem treated above, has a long history,
starting with Poncelet and extending to modern times. It has attracted the
attention of some great mathematicians, such as Jacobi, Cayley, and
Lebesgue, in addition to Steiner. \ In fact, Poncelet, Caley, and Lebesgue 
\cite{Le} considered the more general problem where the two circles are
replaced by two conics, but Jacobi considered only the circle case. Modern
versions of this problem use the machinary of elliptic curves, and modular
curves. Most recently, in celebration of the 200$^{th}$ anniversary of the
Poncelet result, the paper \ \cite{DrMi}\ \ includes an extensive
bibliography on various variations, and generalizations of the problem.

\section{Compliance with ethical standards}

\textbf{Conflict of interest} The author declares that he has no conflict of
interest.

\bigskip

Faruk Abi-Khuzam

Department of Mathematics

American University of Beirut

Beirut, Lebanon

e-mail: farukakh@aub.edu.lb

\end{document}